\numberwithin{equation}{section}
\newtheorem{teo}{Theorem}[section]
\newtheorem{pro}[teo]{Proposition}
\newtheorem{cor}[teo]{Corollary}
\theoremstyle{definition}
\newtheorem{definition}[teo]{Definition}
\newtheorem{exa}[teo]{Example}
\newtheorem{remark}[teo]{Remark}
\theoremstyle{remark}
\def\&{\wedge}
\newcommand{\w}{\omega}
\newcommand{\R}{\mathbb{R}}
\newcommand{\e}{\mathbf{e}}
\newcommand{\bv}{\mathbf{v}}
\newcommand{\bw}{\mathbf{w}}
\newcommand{\bx}{\mathbf{x}}
\newcommand{\bb}{\mathbb}
\begin{document}

\title{The geometry of lightlike surfaces in Minkowski space}

\author{Brian Carlsen}
\address{Department of Mathematics, 395 UCB,U niversity of
Colorado, Boulder, CO 80309-0395}
\email{Brian.Carlsen@colorado.edu}

\author{Jeanne N. Clelland}
\address{Department of Mathematics, 395 UCB, University of
Colorado, Boulder, CO 80309-0395}
\email{Jeanne.Clelland@colorado.edu}

\subjclass[2010]{Primary(53A35), Secondary(51B20)}
\keywords{lightlike surfaces, Minkowski space, method of moving frames}
\thanks{This research was supported in part by NSF grants DMS-0908456 and DMS-1206272.}

\begin{abstract}
We investigate the geometric properties of lightlike surfaces in the Minkowski space $\R^{2,1}$, using Cartan's method of moving frames to compute a complete set of local invariants for such surfaces.  Using these invariants, we give a complete local classification of lightlike surfaces of constant type in $\R^{2,1}$ and construct new examples of such surfaces.  
\end{abstract} 

\maketitle

\section{Introduction}

Lightlike surfaces in Minkowski space have been the subject of much attention in recent years, largely due to their importance in the theory of general relativity.  Some classification results have been obtained for special categories of lightlike surfaces; for example, in \cite{BFL98} it is shown that an entire lightlike hypersurface with zero lightlike mean curvature must be a lightlike hyperplane, while in \cite{IL09} it is shown that the only lightlike surfaces of revolution are lightlike planes and light cones.

In this paper we undertake a much more general classification problem: our main result is the complete local classification of lightlike surfaces of constant type in $\R^{2,1}$ (cf. Theorem \ref{main-theorem}).  (The precise definition of ``constant type" will become clear during the classification process; it is roughly similar to the notion of a regular surface in $\R^3$ containing no umbilic points.)  We use Cartan's method of equivalence to construct a complete set of local invariants for lightlike surfaces of constant type; we then use these invariants to construct canonical local coordinates for lightlike surfaces and give a local normal form for lightlike surfaces parametrized with respect to these coordinates.

The paper is organized as follows: in \S \ref{background-sec} we briefly review the necessary definitions and introduce the notion of adapted frames for lightlike surfaces, together with the associated Maurer-Cartan forms; in \S \ref{equivalence-prob-sec} we carry out the equivalence method to compute local invariants for lightlike surfaces of constant type; in \S \ref{normal-forms-sec} we derive a local normal form for parametrized lightlike surfaces, and in \S \ref{examples-sec} we use this normal form to construct new examples of lightlike surfaces.  Some of the key features of our classification are:
\begin{itemize}
\item In addition to lightlike planes and light cones, we find that there is a much larger family of lightlike surfaces which we call {\em non-conical}; in fact, the family of non-conical lightlike surfaces is locally parametrized by one arbitrary function $f(v)$ of a single variable.
\item The non-conical lightlike surface associated to a given function $f(v)$ is closely related to the Sturm-Liouville operator $\left(\frac{d}{dt}\right)^2 - \tfrac{1}{2} f(v)$; specifically, the coordinate functions of the parametrization for the associated surface are given in terms of solutions to the Sturm-Liouville equation $h''(v) - \tfrac{1}{2} f(v) h(v) = 0$.
\item Every lightlike surface of constant type in $\R^{2,1}$ is ruled by null lines.
\end{itemize}

\section{Lightlike surfaces in $\R^{2,1}$, adapted frames, and Maurer-Cartan forms}\label{background-sec}

\begin{definition}\label{R21-def}
Three-dimensional {\em Minkowski space} is the manifold $\R^{2,1}$ defined by
\[ \R^{2,1} = \left\{ \bx = \begin{bmatrix} x^0 \\[0.05in] x^1 \\[0.05in] x^2 \end{bmatrix} : x^0, x^1, x^2 \in \bb{R} \right\}, \]
with the indefinite inner product $\langle \cdot, \cdot \rangle$ defined on each tangent space $T_{\bx}\R^{2,1}$ by
\[ \langle \bv, \bw \rangle = -v^0 w^0 + v^1 w^1 + v^2 w^2. \]
\end{definition}

\begin{definition}
A nonzero vector $\bv \in T_{\bx}\bb{R}^{2,1}$ is called: 
\begin{itemize}
\item {\em spacelike} if $\langle \bv, \bv \rangle > 0$;
\item {\em timelike} if $\langle \bv, \bv \rangle < 0$;
\item {\em lightlike} or {\em null} if $\langle \bv, \bv \rangle = 0$.
\end{itemize}
In particular, the set of all null vectors $\bv \in T_{\bx}\bb{R}^{2,1}$ forms a cone in the tangent space $T_{\bx}\bb{R}^{2,1}$, called the {\em light cone} or {\em null cone} at $\bx$.
\end{definition}

\begin{definition}
A regular surface $\Sigma \subset \bb{R}^{2,1}$ is called:
\begin{itemize}
\item {\em spacelike} if the restriction of $\langle \cdot, \cdot \rangle$ to each tangent plane $T_{\bx}\Sigma$ is positive definite;
\item {\em timelike} if the restriction of $\langle \cdot, \cdot \rangle$ to each tangent plane $T_{\bx}\Sigma$ is indefinite;
\item {\em lightlike} if the restriction of $\langle \cdot, \cdot \rangle$ to each tangent plane $T_{\bx}\Sigma$ is degenerate.
\end{itemize}
\end{definition}

We will use Cartan's method of moving frames to compute local invariants for lightlike surfaces $\Sigma \subset \R^{2,1}$ under the action of the Minkowksi isometry group, which consists of all transformations $\varphi: \R^{2,1} \to \R^{2,1}$ of the form
\begin{equation}
 \varphi(\bx) = A\bx + \mathbf{b}, \label{Minkowski-isometry}
\end{equation}
where $A \in O(2,1)$ and $\mathbf{b} \in \R^{2,1}$.  
Ordinarily one might begin by considering the set of orthonormal frames $(\e_0, \e_1, \e_2)$ for the tangent space $T_{\bx} \R^{2,1}$ at each point $\bx \in \Sigma$. However, because the restriction of the Minkowski metric $\langle \cdot, \cdot \rangle$ to each tangent space $T_{\bx}\Sigma$ of a lightlike surface is degenerate, orthonormal frames are not easily adaptable to the geometry of lightlike surfaces.  Instead, we will choose frames $(\e_0, \e_1, \e_2)$ based at each point $\bx \in \Sigma$ as follows: 
\begin{enumerate}
\item Since each tangent plane $T_{\bx}\Sigma$ is lightlike, it must contain a unique null direction.  Choose $\e_0$ to be any nonzero vector parallel to this direction.
\item Every nonzero vector in $T_{\bx}\Sigma$ which is linearly independent from $\e_0$ is necessarily spacelike; moreover, $T_{\bx}\Sigma$ consists precisely of all vectors in $T_{\bx}\R^{2,1}$ which are orthogonal to $\e_0$.  Choose $\e_1$ to be any nonzero vector in $T_{\bx}\Sigma$ of unit length; i.e., $\langle \e_1, \e_1 \rangle = 1$.
\item Since $\e_1$ is spacelike, the set of all vectors in $T_{\bx}\R^{2,1}$ which are orthogonal to $\e_1$ is a timelike plane and therefore contains two linearly independent null directions.  One of these directions is $\e_0$; choose $\e_2$ to be a nonzero vector parallel to the other null direction in this plane.  
\item Since $\e_2 \notin T_{\bx}\Sigma$, we must have $\langle \e_0, \e_2 \rangle \neq 0$.  By scaling $\e_2$, we can arrange that $\langle \e_0, \e_2 \rangle = 1$.
\end{enumerate}
Taken together, the conditions above say that the vectors $(\e_0, \e_1)$ span the tangent plane $T_{\bx}\Sigma$ at each point $\bx \in \Sigma$, and the frame vectors $(\e_0, \e_1, \e_2)$ satisfy the inner product relations:
\begin{gather}
 \langle \textbf{e}_0, \textbf{e}_0 \rangle = \langle \textbf{e}_0, \textbf{e}_1 \rangle =   \langle \textbf{e}_1, \textbf{e}_2 \rangle = \langle \textbf{e}_2, \textbf{e}_2 \rangle = 0 ,\label{adapted-frame-inner-products}
\\
\langle \textbf{e}_0, \textbf{e}_2 \rangle = \langle \textbf{e}_1, \textbf{e}_1 \rangle = 1. \notag
\end{gather}
We will call a frame satisfying these conditions \emph{0-adapted}.

Now suppose that $(\e_0, \e_1, \e_2), (\tilde{\e}_0, \tilde{\e}_1, \tilde{\e}_2)$ are two 0-adapted frames on a lightlike surface $\Sigma$.  According to the conditions above: 
\begin{itemize}
\item $\tilde{\e}_0$ must be a nonzero multiple of $\e_0$; say, $\tilde{\e}_0 = \mu \e_0$ for some nonvanishing function $\mu$ on $\Sigma$.  
\item From the inner product conditions \eqref{adapted-frame-inner-products} and the fact that $\tilde{\e}_1 \in T_{\bx}\Sigma$, we must have
\[ \tilde{\e}_1 = \pm \e_1 + \lambda \e_0 \]
for some real-valued function $\lambda$ on $\Sigma$. The sign ambiguity can be avoided by requiring both frames to be positively oriented, so without loss of generality, we can assume that
\[ \tilde{\e}_1 = \e_1 + \lambda \e_0. \]
\item It then follows from the inner product conditions \eqref{adapted-frame-inner-products} that
\[ \tilde{\e}_2 = \frac{1}{\mu} \e_2 - \frac{\lambda}{\mu} \e_1 -\frac{\lambda^2}{2\mu} \e_0. \]
\end{itemize}
Thus we see that any two (oriented) 0-adapted frames
must differ by a transformation of the form
\begin{equation}
\begin{bmatrix} \tilde{\e}_0 & \tilde{\e}_1 & \tilde{\e}_2 \end{bmatrix} = 
\begin{bmatrix} \e_0 & \e_1 & \e_2 \end{bmatrix}
\begin{bmatrix} \mu & \lambda & -\frac{\lambda^2}{2\mu} \\[0.05in]
0 & 1 & -\frac{\lambda}{\mu} \\[0.05in]
0 & 0 & \frac{1}{\mu} \end{bmatrix}.  \label{0-adapted-frame-transformation}
\end{equation}
Furthermore, if $(\e_0, \e_1, \e_2)$ is any 0-adapted frame on $\Sigma$, then any frame $(\tilde{\e}_0, \tilde{\e}_1, \tilde{\e}_2)$ given by \eqref{0-adapted-frame-transformation} is also 0-adapted.

\begin{remark}
The 0-adapted frames on $\Sigma$ may be regarded as the local sections of a principal fiber bundle $\pi: \mathcal{B}_0 \to \Sigma$, with structure group $G_0 \subset GL(3)$ consisting of all invertible matrices of the form in \eqref{0-adapted-frame-transformation}.  The group $G_0$ is referred to as the the {\em structure group} for the 0-adapted frames on $\Sigma$.
\end{remark}

The {\em Maurer-Cartan forms} $\w^{\alpha}, \w^{\alpha}_{\beta}$ associated to a 0-adapted frame $(\e_0, \e_1, \e_2)$ on $\Sigma$ are the 1-forms on $\Sigma$ defined by the equations:
\begin{align}
d\bx & = \e_{\alpha} \w^{\alpha}, \label{MC-forms-def}   \\ 
d\e_{\beta} & =  \e_{\alpha} \w^{\alpha}_{\beta}. \notag
\end{align}
(Note that all indices range from 0 to 2, and we use the Einstein summation convention.)  The 1-forms $\w^0, \w^1, \w^2$ are called the {\em dual forms} (or sometimes the {\em solder forms}), while the 1-forms $\{\w^{\alpha}_{\beta}, \ 0 \leq \alpha, \beta \leq 2\}$ are called the {\em connection forms}.  They satisfy the Maurer-Cartan structure equations:
\begin{align}
d\w^{\alpha} & = - \w^{\alpha}_{\beta} \& \w^{\beta},  \label{structure-eqns} \\
d\w^{\alpha}_{\beta} & = - \w^{\alpha}_{\gamma} \& \w^{\gamma}_{\beta}. \notag
\end{align}
(See \cite{Ivey} for a discussion of Maurer-Cartan forms and their structure equations.)  Differentiating the inner product relations \eqref{MC-forms-def} yields the following relations among the connection forms:
\begin{gather}
\w^2_0 = \w^1_1 = \w^0_2 = 0, \label{MC-forms-relations} 
\\
 \w^2_1 = -\w^1_0 , \qquad \w^2_2 = -\w^0_0, \qquad \w^1_2 = -\w^0_1. \notag
\end{gather}
Thus we can write the matrix $\Omega = [\w^{\alpha}_{\beta}]$ of connection forms as
\[\Omega = \begin{bmatrix} 
			\omega_0^0 & \omega_1^0 & \omega_2^0 \\[0.05in]
			\omega_0^1 & \omega_1^1 & \omega_2^1 \\[0.05in]
			\omega_0^2 & \omega_1^2 & \omega_2^2 
		\end{bmatrix}
		=
		\begin{bmatrix}
			\omega_0^0 & \omega_1^0 & 0 \\[0.05in]
			\omega_0^1 & 0 & -\omega_1^0 \\[0.05in]
			0  & - \omega_0^1 & - \omega_0^0 
		\end{bmatrix} .
\]
If $(\e_0, \e_1, \e_2), (\tilde{\e}_0, \tilde{\e}_1, \tilde{\e}_2)$ are two 0-adapted frames related by the equation
\[ \begin{bmatrix} \tilde{\e}_0 & \tilde{\e}_1 & \tilde{\e}_2 \end{bmatrix} = 
\begin{bmatrix} \e_0 & \e_1 & \e_2 \end{bmatrix} A \]
(with $A$ as in \eqref{0-adapted-frame-transformation}), with associated Maurer-Cartan forms $(\w^{\alpha}, \w^{\alpha}_{\beta}), 
(\tilde{\w}^{\alpha}, \tilde{\w}^{\alpha}_{\beta})$, respectively, then  equations \eqref{MC-forms-def} imply that
\begin{gather*}
 \begin{bmatrix} \tilde{\w}^0 \\[0.05in] \tilde{\w}^1 \\[0.05in] \tilde{\w}^2 \end{bmatrix} = A^{-1} \begin{bmatrix} \w^0 \\[0.05in] \w^1 \\[0.05in] \w^2 \end{bmatrix},  \\[0.1in]
 \begin{bmatrix} \tilde{\omega}_0^0 & \tilde{\omega}_1^0 & 0 \\[0.05in]\tilde{\omega}_0^1 & 0 & -\tilde{\omega}_1^0 \\[0.05in] 0  & - \tilde{\omega}_0^1 & - \tilde{\omega}_0^0 \end{bmatrix} = A^{-1} dA + 
A^{-1}  \begin{bmatrix} \omega_0^0 & \omega_1^0 & 0 \\[0.05in]\omega_0^1 & 0 & -\omega_1^0 \\[0.05in] 0  & - \omega_0^1 & - \omega_0^0 \end{bmatrix} A. 
\end{gather*}
More concretely, we have:
\begin{align}
\tilde{\w}^0 & = \frac{1}{\mu} \w^0 - \frac{\lambda}{\mu} \w^1 - \frac{\lambda^2}{2\mu} \w^2, \notag \\
\tilde{\w}^1 & = \w^1 + \lambda \w^2, \notag \\
\tilde{\w}^2 & = \mu \w^2, \label{0-adapted-MC-forms-transformation} \\
\tilde{\w}^0_0 & = \w^0_0 - \lambda \w^1_0 + \frac{1}{\mu} d\mu, \notag \\
\tilde{\w}^1_0 & = \mu \w^1_0 \notag ,\\
\tilde{\w}^0_1 & = \frac{1}{\mu} \left(\w^0_1 + \lambda \w^0_0 - \frac{1}{2} \lambda^2 \w^1_0 + d\lambda   \right) . \notag
\end{align}

\section{Reduction of the structure group and local invariants}\label{equivalence-prob-sec}

The method of moving frames proceeds by considering relations among the Maurer-Cartan forms on $\Sigma$.  From the equation
\[ d\bx = \e_0 \w^0 + \e_1 \w^1 + \e_2 \w^2 \]
and the fact that $d\bx$ takes values in the tangent space $T_{\bx}\Sigma$ at each point $\bx \in \Sigma$, it follows that $\w^2 = 0$, and that $\w^0, \w^1$ are linearly independent 1-forms which span the cotangent space $T^*_{\bx}\Sigma$ at each point $\bx \in \Sigma$. Differentiating the equation $\w^2=0$ and applying the relations \eqref{MC-forms-relations} yields
\begin{align*}
 0 = d\w^2 & = -\w^2_0 \& \w^0 - \w^2_1 \& \w^1 \\
 & = \w^1_0 \& \w^1.
\end{align*}
By Cartan's lemma (see \cite{Ivey}), we have
\[ \omega^1_0 = a_1 \omega^1 \]
for some function $a_1$ on $\Sigma$.  

Now suppose that we perform a transformation of the form \eqref{0-adapted-frame-transformation}.  According to \eqref{0-adapted-MC-forms-transformation}, the Maurer-Cartan forms $(\tilde{\w}^{\alpha}, \tilde{\w}^{\alpha}_{\beta})$ associated to the new frame satisfy
\begin{align*}
\tilde{\w}^1_0 & = \mu \w^1_0 \\
& = \mu a_1 \w^1 \\
& = \mu a_1 \tilde{\w}^1.
\end{align*}
Therefore, the transformed function $\tilde{a}_1$, defined by the equation
\[ \tilde{\omega}_1^0 = \tilde{a}_1 \tilde{\omega}^1, \]
is given by
\begin{equation}
 \tilde{a}_1 = \mu a_1. \label{a1-transformation}
\end{equation}

According to \eqref{a1-transformation}, the function $a_1$ is a {\em relative invariant} of the surface $\Sigma$: at each point $\bx \in \Sigma$, $a_1$ is either equal to zero for every 0-adapted frame based at $\bx$ or nonzero for every 0-adapted frame based at $\bx$.  In order to proceed with the method of moving frames, we must assume that $\Sigma$ is of {\em constant type} with respect to $a_1$; i.e., that $a_1$ is either identically zero on $\Sigma$ or never equal to zero at any point of $\Sigma$.  The latter assumption is similar in flavor to assuming that a regular surface in $\R^3$ is free of umbilic points; in practice, it simply means that we must restrict our attention to the open subset of $\Sigma$ where $a_1$ is nonzero.

First we consider the case where $a_1 \equiv 0$.

\begin{pro}\label{a1-zero-prop}
Let $\Sigma$ be a connected, regular lightlike surface in $\R^{2,1}$, and suppose that $a_1 \equiv 0$ for every 0-adapted frame on $\Sigma$.  Then $\Sigma$ is contained in a lightlike plane in $\R^{2,1}$.
\end{pro}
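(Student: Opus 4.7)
The strategy is to show that the tangent planes to $\Sigma$ are all parallel to a single fixed lightlike $2$-plane in $\R^{2,1}$; the conclusion will then follow by integration along paths in $\Sigma$.

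First, I would translate the hypothesis into a statement about the Maurer-Cartan forms. Since $a_1 \equiv 0$ for every $0$-adapted frame, the relation $\omega^1_0 = a_1 \omega^1$ forces $\omega^1_0 = 0$ on $\Sigma$. Combined with the identity $\omega^2_0 = 0$ from \eqref{MC-forms-relations}, the defining equation $d\e_0 = \e_\alpha \omega^\alpha_0$ collapses to
\[ d\e_0 = \e_0\,\omega^0_0. \]
This is the key identity: $d\e_0$ is always a scalar multiple of $\e_0$, so the null line spanned by $\e_0(\bx)$ is ``infinitesimally constant'' as $\bx$ varies on $\Sigma$.

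Next I would promote this to a global statement by rescaling $\e_0$ to a genuinely constant vector. From the structure equations \eqref{structure-eqns} and the relations \eqref{MC-forms-relations} one computes $d\omega^0_0 = -\omega^0_1 \wedge \omega^1_0 = 0$, so $\omega^0_0$ is closed on $\Sigma$ and hence locally exact. Writing $\omega^0_0 = -d\log h$ on a simply-connected neighborhood and setting $\tilde{\e}_0 = h\e_0$, a short computation gives $d\tilde{\e}_0 = 0$, so $\e_0$ is parallel to a fixed null vector $\bu$ on the neighborhood. Using connectedness of $\Sigma$, these local fixed directions must agree (up to scale), giving a single globally well-defined null direction $[\bu] \subset \R^{2,1}$.

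Finally, because each tangent plane $T_\bx \Sigma$ is precisely the set of vectors in $T_\bx \R^{2,1}$ orthogonal to $\e_0(\bx)$, and this direction equals the fixed line through $\bu$, we have $T_\bx \Sigma = \bu^\perp$ for every $\bx \in \Sigma$. Since $d\bx = \e_0 \omega^0 + \e_1 \omega^1$ takes values in this fixed plane, integration along a path from some fixed $\bx_0 \in \Sigma$ to an arbitrary $\bx \in \Sigma$ yields $\bx - \bx_0 \in \bu^\perp$. Hence $\Sigma \subset \bx_0 + \bu^\perp$, which is a lightlike plane since $\bu$ is null.

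The main obstacle I anticipate is the passage from the infinitesimal identity $d\e_0 = \e_0\,\omega^0_0$ to a globally constant null direction; this step is where connectedness of $\Sigma$ and the closedness of $\omega^0_0$ are essential, though once the rescaling argument is carried out it is essentially routine.
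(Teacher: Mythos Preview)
Your argument is correct and shares the same core idea as the paper's proof: from $a_1 \equiv 0$ one obtains $d\e_0 = \e_0\,\omega^0_0$, so the null direction $[\e_0]$ is constant on $\Sigma$, and the surface therefore lies in the fixed lightlike plane $\bx_0 + \e_0^\perp$. The execution differs in two places. First, to establish constancy of the direction you pass through closedness of $\omega^0_0$ and a local rescaling; the paper simply asserts constancy of the line directly from $d\e_0 = \e_0\,\omega^0_0$ (which is immediate, since along any path this is a scalar linear ODE for $\e_0$ --- closedness is not needed to fix the \emph{direction}, only to produce a constant \emph{vector}). Second, for the final step the paper introduces the scalar function $f(\bx) = \langle \bx - \bx_0, \e_0 \rangle$ and observes that $df = f\,\omega^0_0$ with $f(\bx_0)=0$, so $f \equiv 0$ by ODE uniqueness; this avoids your path-integration of $d\bx$ and any explicit patching of local data. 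Your route is slightly more hands-on but equally valid; the paper's is a touch more economical.
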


\begin{proof}
If $a_1 \equiv 0$ on $\Sigma$, then we have $\w^1_0 = 0$, and therefore
\[ d\e_0 = \e_0 \w^0_0. \]
In particular, the line spanned by $\e_0$ is constant on $\Sigma$.
Choose a point $\bx_0 \in \Sigma$, and consider the function
\[ f(\bx) = \langle \bx - \bx_0, \e_0 \rangle \]
on $\Sigma$.  We have
\begin{align*}
df & = \langle d\bx, \e_0 \rangle + \langle \bx - \bx_0, d\e_0 \rangle \\
& = \langle \e_0 \w^0 + \e_1 \w^1, \e_0 \rangle + \langle \bx - \bx_0, \e_0 \w^0_0 \rangle \\
& = f \w^0_0.
\end{align*}
Since $f(\bx_0) = 0$, the existence/uniqueness theorem for ordinary differential equations guarantees that the only solution of this equation on $\Sigma$ is $f \equiv 0$.  Therefore, $\Sigma$ is contained in the lightlike plane passing through the point $\bx_0$ and orthogonal to the (constant up to scalar multiple) null direction $\e_0$.
\end{proof}

Now suppose that $a_1 \neq 0$ at every point of $\Sigma$.  According to \eqref{a1-transformation}, there exists a 0-adapted frame on $\Sigma$ for which $a_1 \equiv 1$.  We will call such a frame {\em 1-adapted}.  Any two 1-adapted frames on $\Sigma$ must differ by a transformation of the form \eqref{0-adapted-frame-transformation} with $\mu=1$; i.e., 
\begin{equation}
\begin{bmatrix} \tilde{\e}_0 & \tilde{\e}_1 & \tilde{\e}_2 \end{bmatrix} = 
\begin{bmatrix} \e_0 & \e_1 & \e_2 \end{bmatrix}
\begin{bmatrix} 1 & \lambda & -\frac{1}{2}\lambda^2 \\[0.05in]
0 & 1 & -\lambda \\[0.05in]
0 & 0 & 1 \end{bmatrix}.  \label{1-adapted-frame-transformation}
\end{equation}
The Maurer-Cartan forms associated to two 1-adapted frames related by \eqref{1-adapted-frame-transformation} satisfy the equations (keeping in mind that $\w^2=0$):
\begin{align}
\tilde{\w}^0 & = \w^0 - \lambda \w^1, \notag \\
\tilde{\w}^1 & = \w^1 , \notag \\
\tilde{\w}^0_0 & = \w^0_0 - \lambda \w^1_0,  \label{1-adapted-MC-forms-transformation} \\
\tilde{\w}^1_0 & = \w^1_0, \notag \\
\tilde{\w}^0_1 & = \w^0_1 + \lambda \w^0_0 - \frac{1}{2} \lambda^2 \w^1_0 + d\lambda    . \notag
\end{align}

For any 1-adapted frame, we have $\w^1_0 = \w^1$.  Differentiating this condition yields
\begin{align*}
0 & = d\w^1_0 - d\w^1 \\
& = -\w^1_0 \& \w^0_0 + \w^1_0 \& \w^0 \\
& = (\w^0_0 - \w^0) \& \w^1_0 \\
& =  (\w^0_0 - \w^0) \& \w^1.
\end{align*}
By Cartan's lemma, we have
\[ \w^0_0 - \w^0 = a_2 \w^1 \]
for some function $a_2$ on $\Sigma$.

According to \eqref{1-adapted-MC-forms-transformation}, under a transformation of the form \eqref{1-adapted-frame-transformation} we have
\begin{align*}
\tilde{\w}^0_0 & = \w^0_0 - \lambda \w^1_0 \\
& = (\w^0 + a_2 \w^1) - \lambda \w^1  \\
& = (\tilde{\w}^0 + \lambda \w^1) +  (a_2 - \lambda) \w^1 \\
& = \tilde{\w}^0 + a_2 \tilde{\w}^1.
\end{align*}
Therefore, the transformed function $\tilde{a}_2$, defined by the equation
\[ \tilde{\omega}_0^0 = \tilde{\w}^0 + \tilde{a}_2 \tilde{\omega}^1, \]
given by
\begin{equation}
 \tilde{a}_2 = a_2. \label{a2-transformation}
\end{equation}
In other words, $a_2$ is an invariant function on $\Sigma$.

Since the value of $a_2$ is the same for all 1-adapted frames based at a point $\bx \in \Sigma$, we must consider additional derivatives in order to make further frame adaptations.  Our next step is to differentiate the equation $\w^0_0 = \w^0 + a_2 \w^1$:
\begin{align*}
0 & = d\w^0_0 - d(\w^0 + a_2 \w^1) \\
& = -\w^0_1 \& \w^1_0 + \w^0_0 \& \w^0 + \w^0_1 \& \w^1 - da_2 \& \w^1 + a_2 \w^1_0 \& \w^0 \\
& = -(da_2 \& \w^1 - a_2 \w^1_0 \& \w^0 - (\w^0 + a_2 \w^1) \& \w^0) \\
& = -(da_2 + 2 a_2 \w^0) \& \w^1.
\end{align*}
By Cartan's lemma, we have
\begin{equation}
 da_2 = -2 a_2 \w^0 + a_3 \w^1 \label{da2-equation}
\end{equation}
for some function $a_3$ on $\Sigma$.  According to \eqref{1-adapted-MC-forms-transformation}, under a transformation of the form \eqref{1-adapted-frame-transformation} we have
\begin{align*}
da_2 & = -2 a_2 \w^0 + a_3 \w^1 \\
& = -2 a_2 (\tilde{\w}^0 + \lambda \w^1) + a_3 \w^1 \\
& = -2 a_2 \tilde{\w}^0 + (a_3 - 2 a_2 \lambda) \tilde{\w}^1.
\end{align*}
Therefore, the transformed function $\tilde{a}_3$, defined by the equation
\[ da_2 = -2 a_2 \tilde{\w}^0 + \tilde{a}_3 \tilde{\w}^1, \]
given by
\begin{equation}
 \tilde{a}_3 = a_3 - 2 a_2 \lambda. \label{a3-transformation}
\end{equation}

From \eqref{a3-transformation}, we see that the ability to normalize $a_3$ depends on the value of $a_2$: if $a_2 \neq 0$, then there exists a 1-adapted frame with $a_3=0$; however, if $a_2=0$, then $a_3$ is an invariant.  So at this point we need to make another constant type assumption: we will assume that $a_2$ is either identically zero on $\Sigma$ or never equal to zero at any point of $\Sigma$.  

First consider the case where $a_2 \equiv 0$.

\begin{pro}\label{a2-zero-prop}
Let $\Sigma$ be a connected, regular lightlike surface in $\R^{2,1}$ with $a_1 \neq 0$, and suppose that $a_2 \equiv 0$ for every 1-adapted frame on $\Sigma$.  Then $\Sigma$ is contained in a lightlike cone in $\R^{2,1}$.
\end{pro}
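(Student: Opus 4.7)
The plan is to exploit the identities enforced by the hypotheses $a_1 = 1$ (which we may assume since the frame is 1-adapted) and $a_2 \equiv 0$ in order to show that $d\bx$ and $d\e_0$ coincide on $\Sigma$. Once we have that, $\bx - \e_0$ will be a constant point $\bx_0$ of $\R^{2,1}$, and the null condition on $\e_0$ will force $\langle \bx - \bx_0, \bx - \bx_0 \rangle = 0$ for every $\bx \in \Sigma$, which is precisely the assertion that $\Sigma$ lies in the lightlike cone with vertex $\bx_0$.

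First I would unpack the two algebraic consequences of our normalizations: on a 1-adapted frame we have $\w^1_0 = \w^1$, and by the preceding derivation $\w^0_0 - \w^0 = a_2 \w^1$, so the hypothesis $a_2 \equiv 0$ forces $\w^0_0 = \w^0$. Together with the universal relation $\w^2_0 = 0$ from \eqref{MC-forms-relations}, the structure equation $d\e_0 = \e_\alpha \w^\alpha_0$ collapses to
\[
d\e_0 = \e_0\, \w^0_0 + \e_1\, \w^1_0 = \e_0\, \w^0 + \e_1\, \w^1.
\]
On the other hand, since $\Sigma$ is lightlike we have $\w^2 = 0$, so $d\bx = \e_0 \w^0 + \e_1 \w^1$ as well. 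Hence $d(\bx - \e_0) = 0$ on $\Sigma$.

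Connectedness of $\Sigma$ then lets me integrate: there exists a fixed point $\bx_0 \in \R^{2,1}$ such that $\bx - \e_0 = \bx_0$ at every point of $\Sigma$, i.e., $\bx - \bx_0 = \e_0$ pointwise. Because $\e_0$ is null by \eqref{adapted-frame-inner-products}, this yields
\[
\langle \bx - \bx_0,\, \bx - \bx_0 \rangle = \langle \e_0, \e_0 \rangle = 0
\]
for all $\bx \in \Sigma$, so $\Sigma$ is contained in the null cone at $\bx_0$, as claimed.

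The only delicate point is really a bookkeeping one: $\e_0$ is a tangent vector based at $\bx$, while $\bx_0$ is an honest point of $\R^{2,1}$, so I would take a moment to justify the identification $\bx - \e_0 \in \R^{2,1}$ using the affine structure of Minkowski space (where tangent vectors at any point are canonically identified with displacements), and to note that this identification commutes with $d$, which is what validates the step $d(\bx - \e_0) = d\bx - d\e_0 = 0$. Apart from this, no further obstacle arises; the argument is strictly parallel in spirit to the proof of Proposition \ref{a1-zero-prop}, with the role of a constant null direction there replaced here by a constant null vertex.
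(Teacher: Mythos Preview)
Your proof is correct and follows essentially the same route as the paper: from $a_2 \equiv 0$ you deduce $\omega^0_0 = \omega^0$, combine this with $\omega^1_0 = \omega^1$ and $\omega^2 = \omega^2_0 = 0$ to get $d\e_0 = d\bx$, and then integrate using connectedness to place $\Sigma$ in the null cone at the constant vertex $\bx - \e_0$. The paper's argument is identical apart from notation (it calls the vertex $\mathbf{p}$) and omits your aside about the affine identification of tangent vectors with displacements.
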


\begin{proof}
If $a_2 \equiv 0$ on $\Sigma$, then we have $\w^0_0 = \w^0$, and so
\begin{align*}
 d\e_0 & = \e_0 \w^0_0 + \e_1 \w^1_0 \\
 & = \e_0 \w^0 + \e_1 \w^1 \\
 & = d\bx.
\end{align*}
Therefore,
\[ d(\bx - \e_0) = 0, \]
and so there exists a point $\mathbf{p} \in \R^{2,1}$ such that
\[ \bx - \e_0 = \mathbf{p} \]
for every point $\bx \in \Sigma$.  But then we have
\[ \bx - \mathbf{p} = \e_0, \]
from which it follows that $\bx - \mathbf{p}$ is a null vector for every $\bx \in \Sigma$.  Thus $\Sigma$ is contained in the lightlike cone based at $\mathbf{p}$, defined by the equation
\[ \langle \bx - \mathbf{p}, \bx - \mathbf{p} \rangle = 0. \]

\end{proof}

Now suppose that $a_2 \neq 0$ at every point of $\Sigma$; we will call such a surface {\em non-conical}.  According to \eqref{a3-transformation}, there exists a 1-adapted frame on $\Sigma$ for which $a_3 \equiv 0$.  We will call such a frame {\em 2-adapted}.  Any two 2-adapted frames on $\Sigma$ must differ by a transformation of the form \eqref{1-adapted-frame-transformation} with $\lambda=0$---which means that, in fact, there is a {\em unique} 2-adapted frame at each point of $\Sigma$. All that remains is to compute the remaining invariants associated to this frame.

So far, we know that the Maurer-Cartan forms associated to a 2-adapted frame satisfy the following relations:
\begin{align}
\w^2 & = 0, \notag \\
\w^1_0 & = \w^1,  \label{2-adapted-relations-1} \\
\w^0_0 & = \w^0 + a_2 \w^1. \notag
\end{align}
Moreover, from equation \eqref{da2-equation} and the fact that $a_3=0$, we have
\begin{equation}
 da_2 = -2 a_2 \w^0. \label{da2-equation-2}
\end{equation}
Differentiating equation \eqref{da2-equation-2} yields
\begin{align*}
0 & = -2 da_2 \& \w^0 + 2 a_2 (\w^0_0 \& \w^0 + \w^0_1 \& \w^1) \\
& = 2 a_2( \w^0_1 - a_2 \w^0 )\& \w^1.
\end{align*}
By Cartan's lemma, we have
\[ \w^0_1 - a_2 \w^0 = a_4 \w^1 \]
for some function $a_4$ on $\Sigma$.  Together with \eqref{2-adapted-relations-1}, this gives a complete set of relations among the Maurer-Cartan forms for a 2-adapted frame:
\begin{align}
\w^2 & = 0, \notag \\
\w^1_0 & = \w^1,  \label{2-adapted-relations-2} \\
\w^0_0 & = \w^0 + a_2 \w^1, \notag \\
\w^0_1 & = a_2 \w^0 + a_4 \w^1. \notag
\end{align}
According to the general theory of moving frames, the functions $a_2, a_4$ (and their derivatives) form a complete set of local invariants for non-conical lightlike surfaces.  These functions are not entirely arbitrary: we already know that $a_2$ must satisfy the differential equation \eqref{da2-equation-2}, and differentiating the equation for $\w^0_1$ in \eqref{2-adapted-relations-2} shows that $a_4$ must satisfy the differential equation
\begin{equation}
da_4 = (a_2^2 - 2 a_4) \w^0 + a_5 \w^1 \label{da4-equation}
\end{equation}
for some function $a_5$ on $\Sigma$.

\section{Canonical coordinates and normal forms}\label{normal-forms-sec}

In this section we will construct canonical local coordinates in a neighborhood of each point of a non-conical lightlike surface.  Then, using these coordinates, we will derive a local normal form for parametrizations of non-conical lightlike surfaces.

First, observe that
\begin{align*}
	d \omega^0 &= - \omega_0^0 \wedge \omega^0 - \omega_1^0 \wedge \omega^1 \\
		&= - ( \omega^0 + a_2 \omega^1) \wedge \omega^0 - (a_2 \omega^0 + a_4 \omega^1) \wedge \omega^1 \\
		&= a_2 \omega^0 \wedge \omega^1 - a_2 \omega^0 \wedge \omega^1 \\
		&= 0.
\end{align*}
By Poincar\'e's lemma (see \cite{Ivey}), locally there exists a function $u$ on $\Sigma$, unique up to an additive constant, such that
\begin{equation*}
 \w^0 = du. \label{w0-in-coords}
\end{equation*}
Next, we have
\begin{align}
	d \omega^1 &= - \omega_0^1 \wedge \omega^0 \notag \\
		&= - \omega^1 \wedge \omega^0  \label{domega1-equation} \\
		&= \omega^0 \wedge \omega^1. \notag
\end{align}
Since $d\w^1 \equiv 0 \mod{\w^1}$, the Frobenius theorem (see \cite{Ivey}) implies that locally there exist functions $v, \sigma$ on $\Sigma$, with $\sigma \neq 0$, such that
\begin{equation*}
 \w^1 = \sigma\, dv. \label{w1-in-coords-1}
\end{equation*}
The independence condition
\[ \w^0 \& \w^1 \neq 0 \]
then implies that $(u,v)$ form a local coordinate system on $\Sigma$, and so $\sigma$ may be regarded as a function $\sigma(u,v)$.  Then the structure equation \eqref{domega1-equation} becomes:
\[ \sigma_u\, du \& dv = \sigma\, du \& dv. \]
Thus the function $\sigma$ satisfies the partial differential equation
\[ \sigma_u = \sigma, \]
and hence
\[ \sigma(u,v) = e^u \sigma_0(v)\, dv \]
for some nonvanishing function $\sigma_0(v)$ on $\Sigma$.  Moreover, since 
\[ d(\sigma_0(v)\, dv) = 0, \]
Poincar\'e's lemma implies that locally, there exists a function $\tilde{v}$ on $\Sigma$ such that
\[ \sigma_0(v)\, dv = d\tilde{v}, \]
and then
\[ \w^1 = e^u\, d\tilde{v}. \]
Replacing $v$ with $\tilde{v}$ (and dropping the tilde) produces local coordinates $(u,v)$ on $\Sigma$ such that
\begin{equation}
\w^0 = du, \qquad \w^1 = e^u\, dv.  \label{local-coords}
\end{equation}
These coordinates are unique up to transformations of the form
\begin{equation}
 u \to u + r, \qquad v \to e^{-r} v + s, \label{coord-trans}
\end{equation}
with $r, s \in \R$.

Now consider the functions $a_2, a_4$ on $\Sigma$ as functions of $(u,v)$.  Equation \eqref{da2-equation-2} is equivalent to the PDE system
\[ (a_2)_u = -2 a_2, \qquad (a_2)_v = 0 \]
for the function $a_2(u,v)$; thus we have
\begin{equation}
 a_2 = c e^{-2u} \label{a2-solution}
\end{equation}
for some constant $c \in \R$.  Since we have assumed that $a_2 \neq 0$, we must have $c \neq 0$, and by a coordinate transformation of the form \eqref{coord-trans}, we can arrange that $c = \pm 1$.  Moreover, by reversing the orientation of $\Sigma$ if necessary, we can assume that $c=1$.

Now equation \eqref{da4-equation} is equivalent to the PDE
\[ (a_4)_u = e^{-4u} - 2 a_4  \]
for the function $a_4(u,v)$; the general solution to this equation is
\begin{equation}
a_4 = -\tfrac{1}{2} e^{-4u} + f(v) e^{-2u}, \label{a4-solution}
\end{equation}
where $f(v)$ is an arbitrary function of $v$.

It is straightforward to check that with $a_2, a_4$ as above, all the Maurer-Cartan structure equations \eqref{structure-eqns} are satisfied by the forms \eqref{2-adapted-relations-2}.  According to Cartan's  general theory of moving frames (see \cite{Griffiths74} for details), this guarantees the local existence of a lightlike surface $\Sigma$, together with a 2-adapted frame $(\e_0, \e_1, \e_2)$ along $\Sigma$ whose Maurer-Cartan forms are precisely the forms \eqref{2-adapted-relations-2}.  The equations \eqref{MC-forms-def} may be regarded as a compatible, overdetermined system of PDE's for the functions $\bx, \e_0, \e_1, \e_2$.  Given any solution of this system, the function $\bx(u,v)$ is a local parametrization of a non-conical lightlike surface in $\R^{2,1}$; moreover, our constructions up to this point show that locally, every non-conical lightlike surface arises in this way.

In order to examine the system \eqref{MC-forms-def} more closely, we first use equations \eqref{local-coords}, \eqref{a2-solution}, \eqref{a4-solution} to write all the Maurer-Cartan forms as linear combinations of $du, dv$:  \begin{align}
\w^2 & = 0, \notag \\
\w^0 & = du ,\notag \\
\w^1 & = e^u\, dv ,\label{MC-forms-in-coords}\\
\w^1_0 & = e^u\, dv ,\notag \\
\w^0_0 & = du + e^{-u}\, dv ,\notag \\
\w^0_1 & = e^{-2u}\, du + (-\tfrac{1}{2} e^{-3u} + f(v) e^{-u}) dv. \notag
\end{align}
Substituting these expressions into the equations for the $d\e_{\alpha}$ in \eqref{MC-forms-def} yields the following equations:
\begin{align}
d\e_0 & = \e_0\, du + (e^{-u} \e_0 + e^u \e_1)\, dv, \notag \\
d\e_1 & = e^{-2u} \e_0\, du + \left( (-\tfrac{1}{2} e^{-3u} + f(v) e^{-3u})\e_0 - e^u \e_2 \right)\, dv ,\label{de-equations-in-coords} \\
d\e_2 & = -\left( e^{-2u} \e_1 + \e_2 \right) \, du - \left( (-\tfrac{1}{2} e^{-3u} + f(v) e^{-3u})\e_1 + e^{-u} \e_2 \right) \, dv. \notag
\end{align}

The $du$-components of these equations are straightforward to integrate: from the first equation, we have
\[ (\e_0)_u = \e_0, \]
and therefore, 
\begin{equation}
 \e_0 = e^u G_0(v) \label{e0-solution-1}
\end{equation}
for some $\R^{2,1}$-valued function $G_0(v)$.  Substituting this expression into the $du$-component of the second equations yields
\[ (\e_1)_u = e^{-u} G_0(v), \]
and therefore,
\begin{equation}
 \e_1 = -e^{-u} G_0(v) + G_1(v) \label{e1-solution-1}
\end{equation}
for some $\R^{2,1}$-valued function $G_1(v)$.  Finally, the $du$-component of the third equation becomes
\[ (\e_2)_u = e^{-3u} G_0(v) - e^{-2u} G_1(v) - \e_2,   \]
and therefore,
\begin{equation}
 \e_2 = -\tfrac{1}{2}  e^{-3u} G_0(v) +  e^{-2u} G_1(v) + e^{-u} G_2(v) \label{e2-solution-1}
\end{equation}
for some $\R^{2,1}$-valued function $G_2(v)$.

Now, substituting \eqref{e0-solution-1}, \eqref{e1-solution-1}, \eqref{e2-solution-1} into the $dv$-components of equations \eqref{de-equations-in-coords} yields the following differential equations for the functions $G_0(v), G_1(v), G_2(v)$:
\begin{align}
G_0'(v) & = G_1(v), \notag \\
G_1'(v) & = f(v) G_0(v) - G_2(v), \label{G-odes} \\
G_2'(v) & = -f(v) G_1(v). \notag
\end{align}
This system is equivalent to the third-order ODE
\begin{equation}
G_0'''(v) - 2 f(v) G_0'(v) - f'(v) G_0(v) = 0 \label{G-3rd-order-ode}
\end{equation}
for the function $G_0(v)$, together with the equations
\begin{align*}
G_1(v) & = G_0'(v), \\
G_2(v) & = f(v) G_0(v) - G_0''(v) 
\end{align*}
for $G_1(v), G_2(v)$.

It turns out that solutions of \eqref{G-3rd-order-ode} are related to solutions of a second-order Sturm-Liouville equation associated to the function $f(v)$ (see, e.g., \cite{BC96} for a discussion of Sturm-Liouville operators):

\begin{pro}\label{Sturm-Liouville-prop}
Let $h(v)$ be any real-valued solution of the Sturm-Liouville equation
\begin{equation}
 \left( \left(\frac{d}{dt}\right)^2 -  \tfrac{1}{2} f(v) \right) h(v) = 0. \label{SL-equation}
\end{equation}
Then the function $g(v) = \left( h(v) \right)^2$ is a real-valued solution of \eqref{G-3rd-order-ode}.  Moreover, any real-valued solution $g(v)$ of \eqref{G-3rd-order-ode} is a linear combination of solutions of this form.
\end{pro}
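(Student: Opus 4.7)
The first assertion is a direct calculation: writing $g = h^2$ and differentiating three times, with the Sturm--Liouville relation $h'' = \tfrac{1}{2} f h$ used to eliminate second (and derived third) derivatives of $h$, one finds
\[
g''' = 6h'h'' + 2hh''' = 3fhh' + 2h\bigl(\tfrac{1}{2}f'h + \tfrac{1}{2}fh'\bigr) = 4fhh' + f'h^{2},
\]
while $2fg' + f'g = 4fhh' + f'h^{2}$, so $g''' - 2fg' - f'g = 0$. This is just an exercise in substitution and I would present it as a one-line computation.

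For the converse, the plan is to exhibit three linearly independent solutions of \eqref{G-3rd-order-ode} of the form ``square (or product) of Sturm--Liouville solutions,'' and invoke the fact that \eqref{G-3rd-order-ode} is a linear third-order ODE with smooth coefficients, so its solution space is exactly three-dimensional. Let $h_{1}, h_{2}$ be two linearly independent solutions of \eqref{SL-equation}. The same computation as above (performed with the product $h_{1}h_{2}$ in place of $h^{2}$, using the polarization $h_{1}h_{2} = \tfrac{1}{4}[(h_{1}+h_{2})^{2} - (h_{1}-h_{2})^{2}]$ to reduce to the square case, or verified directly) shows that $h_{1}^{2}$, $h_{1}h_{2}$, $h_{2}^{2}$ are all solutions of \eqref{G-3rd-order-ode}.

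The key step is to show these three solutions are linearly independent. Suppose $\alpha h_{1}^{2} + \beta h_{1} h_{2} + \gamma h_{2}^{2} \equiv 0$ on the interval where $h_{2} \neq 0$; dividing by $h_{2}^{2}$ gives a quadratic equation in the ratio $\phi := h_{1}/h_{2}$, so $\phi$ takes at most two values there. But $\phi' = -W(h_{1},h_{2})/h_{2}^{2}$, and the Wronskian $W(h_{1},h_{2})$ is a nonzero constant (the Sturm--Liouville operator in \eqref{SL-equation} has no first-derivative term, so Abel's identity gives $W' = 0$; nonzero by linear independence of $h_{1}, h_{2}$). Hence $\phi$ is strictly monotone, forcing $\alpha = \beta = \gamma = 0$.

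With the three-dimensional span of $\{h_{1}^{2}, h_{1}h_{2}, h_{2}^{2}\}$ coinciding with the full solution space of \eqref{G-3rd-order-ode}, the polarization identity $h_{1}h_{2} = \tfrac{1}{4}[(h_{1}+h_{2})^{2} - (h_{1}-h_{2})^{2}]$ rewrites the middle generator as a linear combination of squares of Sturm--Liouville solutions, and the proof is complete. The main obstacle is the independence argument in the previous paragraph; everything else is mechanical differentiation or standard linear ODE theory.
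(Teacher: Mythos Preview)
Your proof is correct and follows essentially the same route as the paper's: verify by direct differentiation that squares of Sturm--Liouville solutions satisfy \eqref{G-3rd-order-ode}, then observe that $h_1^2,\ h_1h_2,\ h_2^2$ span the three-dimensional solution space.  The paper simply asserts that these three functions are independent, whereas you supply an actual argument via Abel's identity and monotonicity of the ratio $h_1/h_2$; you also make explicit the polarization identity showing $h_1h_2$ is itself a combination of squares.  So your version is the same proof, but with the gaps filled in.
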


\begin{proof}
Suppose that $h(v)$ satisfies \eqref{SL-equation}, and let $g(v) = \left( h(v) \right)^2$.  Then
\begin{align*}
g_0'(v) & = 2 h(v) h'(v); \\[0.1in]
g_0''(v) & = 2 \left(h'(v)\right)^2 + 2 h(v) h''(v) \\
& = 2 \left(h'(v)\right)^2 + f(v) \left(h(v)\right)^2; \\[0.1in]
g_0'''(v) & = 4 h'(v) h''(v) + f'(v) \left(h(v)\right)^2 + 2 f(v) h(v) h'(v) \\
& = 2 f(v) h(v) h'(v) + f'(v) \left(h(v)\right)^2 + 2 f(v) h(v) h'(v) \\
& = 2 f(v) g'(v) + f'(v) g(v);
\end{align*}
therefore $g(v)$ satisfies \eqref{G-3rd-order-ode}.

For the second statement, note that since \eqref{SL-equation} is a second-order, homogeneous, linear ODE, it has a 2-dimensional space of real-valued solutions spanned by two independent functions $h_1(v), h_2(v)$.  Since the solution space of \eqref{G-3rd-order-ode} must contain all linear combinations of squares of solutions of \eqref{SL-equation}, it must contain all linear combinations of the the three independent functions 
\begin{equation}
(h_1(v))^2, \qquad h_1(v) h_2(v), \qquad (h_2(v))^2. \label{g-solns-basis}
\end{equation}
But the solution space of \eqref{G-3rd-order-ode} is precisely 3-dimensional; hence every real-valued solution of \eqref{G-3rd-order-ode} is a linear combination of the functions \eqref{g-solns-basis}.

\end{proof}

It follows from Proposition \ref{Sturm-Liouville-prop} that the entries of any $\R^{2,1}$-valued solution $G_0(v)$ of \eqref{G-3rd-order-ode} must be linear combinations of the functions \eqref{g-solns-basis}.  Moreover, the inner product relations \eqref{adapted-frame-inner-products} must hold for the frame $(\e_0, \e_1, \e_2)$ given by \eqref{e0-solution-1}, \eqref{e1-solution-1}, \eqref{e2-solution-1}.  Fortunately, the symmetries of the Maurer-Cartan connection forms guarantee that if the relations \eqref{adapted-frame-inner-products} hold at any point of $\Sigma$, then they hold identically on $\Sigma$.  We can arrange this by imposing appropriate restrictions on the initial conditions for the ODE \eqref{G-3rd-order-ode}.

Finally, substituting \eqref{e0-solution-1}, \eqref{e1-solution-1} into the equation for $d\bx$ in \eqref{MC-forms-def} yields
\begin{align*}
 d\bx & = \e_0 \w^0 + \e_1 \w^1 \\
 & = \left( e^u G_0(v) \right)\, du + \left( -G_0(v) + e^u G_1(v) \right)\, dv \\
 & =   \left( e^u G_0(v) \right)\, du + \left( -G_0(v) + e^u G_0'(v) \right)\, dv.
\end{align*}
Integrating this equation (and translating so that $\bx(0,0) = \mathbf{0}$) gives
\begin{equation}
\bx(u,v) = e^u G_0(v) - \int_0^v G_0(\tau)\, d\tau - G_0(0).  \label{surface-param}
\end{equation}

By an action of the Minkowski isometry group, we can arrange that
\begin{equation*}
\bx(0,0) = \begin{bmatrix} 0 \\[0.05in] 0 \\[0.05in] 0 \end{bmatrix}, \qquad
\e_0 = \frac{1}{\sqrt{2}}\begin{bmatrix} 1 \\[0.05in] 1 \\[0.05in] 0 \end{bmatrix}, \qquad
\e_1 = \begin{bmatrix} 0 \\[0.05in] 0 \\[0.05in] 1 \end{bmatrix}, \qquad
\e_2 = \frac{1}{\sqrt{2}}\begin{bmatrix} -1 \\[0.05in] 1 \\[0.05in] 0 \end{bmatrix}.
\end{equation*}
This corresponds to choosing initial conditions
\begin{equation}
 G_0(0) = \frac{1}{\sqrt{2}}\begin{bmatrix} 1 \\[0.05in] 1 \\[0.05in] 0 \end{bmatrix}, \qquad 
G_0'(0) = \begin{bmatrix} \frac{1}{\sqrt{2}} \\[0.05in] \frac{1}{\sqrt{2}} \\[0.05in] 1 \end{bmatrix}, \qquad
G_0''(0) = \begin{bmatrix} \frac{1}{\sqrt{2}}(f(0) + \tfrac{3}{2}) \\[0.05in] \frac{1}{\sqrt{2}}(f(0) - \tfrac{1}{2}) \\[0.05in] 1 \end{bmatrix}
\label{G0-ICs}
\end{equation}
for the function $G_0(v)$.

Together with Propositions \ref{a1-zero-prop} and \ref{a2-zero-prop}, this proves the following classification theorem:

\begin{teo}\label{main-theorem}
Let $\Sigma \subset \R^{2,1}$ be a regular lightlike surface of constant type. Then up to a Minkowski isometry of the form \eqref{Minkowski-isometry}, $\Sigma$ is either:
\begin{itemize}
\item contained in a lightlike plane in $\R^{2,1}$;
\item contained in a lightlike cone in $\R^{2,1}$; or
\item locally parametrized by \eqref{surface-param}, where $G_0(v)$ is the unique solution of \eqref{G-3rd-order-ode} satisfying the initial conditions \eqref{G0-ICs}, and $f(v)$ is an arbitrary function of one variable.
\end{itemize}
\end{teo}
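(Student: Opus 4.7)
The plan is to dispose of the three cases according to the constant-type hypotheses on the relative invariants $a_1$ and $a_2$ introduced in Section \ref{equivalence-prob-sec}. If $a_1 \equiv 0$, Proposition \ref{a1-zero-prop} immediately places $\Sigma$ in a lightlike plane. If $a_1 \neq 0$ but $a_2 \equiv 0$ for the 1-adapted frame, Proposition \ref{a2-zero-prop} places $\Sigma$ in a lightlike cone. The only substantive work is therefore the non-conical case, where we must produce the explicit parametrization \eqref{surface-param}.

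In that case, the ingredients assembled in Section \ref{normal-forms-sec} do most of the work. First, introduce the canonical coordinates $(u,v)$ from \eqref{local-coords}, constructed from $d\w^0 = 0$ and $d\w^1 \equiv 0 \pmod{\w^1}$ via the Poincar\'e and Frobenius lemmas. In these coordinates the ODEs \eqref{da2-equation-2} and \eqref{da4-equation} for $a_2, a_4$ solve to give \eqref{a2-solution} and \eqref{a4-solution}, after absorbing the nonzero constant $c$ by the residual freedom \eqref{coord-trans} and a reversal of orientation if necessary. The Maurer-Cartan forms are then the explicit 1-forms \eqref{MC-forms-in-coords}, and the structure equations \eqref{structure-eqns} are satisfied identically in $f(v)$.

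The main step is then integrating the system \eqref{MC-forms-def} for $\bx$ and the frame vectors $\e_\alpha$ in the coordinates $(u,v)$. Solving the $du$-components of $d\e_0, d\e_1, d\e_2$ in succession produces the exponential-in-$u$ expressions \eqref{e0-solution-1}--\eqref{e2-solution-1}, involving $\R^{2,1}$-valued functions $G_0(v), G_1(v), G_2(v)$. Substituting these into the $dv$-components of \eqref{de-equations-in-coords} collapses to the first-order ODE system \eqref{G-odes}, which is equivalent to the single third-order ODE \eqref{G-3rd-order-ode} for $G_0(v)$ together with the algebraic relations $G_1 = G_0'$ and $G_2 = f G_0 - G_0''$. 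Integrating $d\bx = \e_0\, \w^0 + \e_1\, \w^1$ and translating so that $\bx(0,0) = \mathbf{0}$ then yields the parametrization \eqref{surface-param}.

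The remaining task is to normalize the initial data. The Minkowski isometry group \eqref{Minkowski-isometry} acts simply transitively on 0-adapted (hence 2-adapted) frames at a point, so we may use one isometry to send $\bx(0,0)$ to the origin and the 2-adapted frame at $(0,0)$ to the standard frame displayed just above \eqref{G0-ICs}; this produces exactly the prescribed initial conditions \eqref{G0-ICs} for $G_0, G_0', G_0''$. The delicate point I expect to be the main obstacle is verifying the global (rather than pointwise) validity of the 0-adapted inner-product relations \eqref{adapted-frame-inner-products} for the integrated frame. This is handled by the observation that the connection matrix $\Omega$ displayed after \eqref{MC-forms-relations} takes values in the Lie algebra preserving the inner product matrix encoded by \eqref{adapted-frame-inner-products}, so each pairing $\langle \e_\alpha, \e_\beta \rangle$ is constant on $\Sigma$. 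Hence the initial conditions \eqref{G0-ICs} force these inner products to hold everywhere, and the three bullet points of the theorem exhaust all cases.
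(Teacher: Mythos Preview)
Your proposal is correct and follows essentially the same route as the paper. The theorem there is presented as the culmination of Propositions \ref{a1-zero-prop} and \ref{a2-zero-prop} together with the integration carried out in Section \ref{normal-forms-sec}, and you have accurately summarized each step, including the normalization of the constant $c$ in \eqref{a2-solution}, the integration in $u$ leading to \eqref{e0-solution-1}--\eqref{e2-solution-1} and then \eqref{surface-param}, the use of an isometry to impose \eqref{G0-ICs}, and the Lie-algebraic reason that the inner-product relations \eqref{adapted-frame-inner-products} propagate from the initial point.
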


This theorem says that locally, a non-conical lightlike surface $\Sigma$ in $\R^{2,1}$ is completely determined by the choice of one arbitrary function $f(v)$, and $\Sigma$ can be constructed explicitly from solutions to the Sturm-Liouville equation \eqref{SL-equation} determined by $f$.

As a consequence of the formula \eqref{surface-param}, we obtain the following corollary:

\begin{cor}\label{ruled-cor}
Every lightlike surface $\Sigma$ of constant type in $\R^{2,1}$ is ruled by null lines.
\end{cor}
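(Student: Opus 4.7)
The plan is to handle each of the three cases in Theorem \ref{main-theorem} separately, since in each case a ruling by null lines is either obvious or can be read off directly from the parametrization \eqref{surface-param}.

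For the first two cases, the ruling is essentially by inspection. A lightlike plane $P \subset \R^{2,1}$ is the orthogonal complement of a null vector $\mathbf{n}$ and therefore contains $\mathbf{n}$ itself; hence $P$ is foliated by the family of lines parallel to $\mathbf{n}$, each of which is null. A lightlike cone based at a point $\mathbf{p}$ is by definition the union of the null lines through $\mathbf{p}$, so it is tautologically ruled by null lines. I would dispatch these two cases in one short sentence each.

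The main content is the non-conical case. Here I would use the parametrization
\[
 \bx(u,v) = e^u G_0(v) - \int_0^v G_0(\tau)\, d\tau - G_0(0)
\]
and observe that, for each fixed $v_0$, the curve $u \mapsto \bx(u,v_0)$ has the form
\[
 \bx(u,v_0) = e^u\, G_0(v_0) + \mathbf{c}(v_0),
\]
where $\mathbf{c}(v_0)$ is independent of $u$. As $u$ ranges over $\R$, the points $\bx(u,v_0)$ trace out (an open arc of) the affine line through $\mathbf{c}(v_0)$ in the direction of $G_0(v_0)$. Thus $\Sigma$ is foliated by the $v = \text{const}$ curves, each of which lies on an affine line.

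It remains to verify that these lines are null. From \eqref{e0-solution-1} we have $\e_0 = e^u G_0(v)$, so $G_0(v) = e^{-u}\e_0$; since $\e_0$ is a null vector by \eqref{adapted-frame-inner-products}, the direction vector $G_0(v)$ is null as well. This completes the non-conical case. I do not anticipate any real obstacle here — the substantive work has already been done in producing the explicit form \eqref{surface-param}, and the corollary is really a structural observation about that formula. The only minor care needed is to note that throughout the moving frame construction $\e_0$ was chosen precisely as a null vector tangent to $\Sigma$, so the lines $v = \text{const}$ are not merely lines but null lines lying within $\Sigma$.
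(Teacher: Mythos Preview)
Your proof is correct and follows essentially the same route as the paper's: dispatch the planar and conical cases by inspection, then for the non-conical case observe from \eqref{surface-param} that each $u$-parameter curve $v=v_0$ is (an arc of) a line in the direction $G_0(v_0)$, and invoke \eqref{e0-solution-1} to see that $G_0(v_0)$ is a scalar multiple of the null vector $\e_0$. The only cosmetic remark is that as $u$ ranges over $\R$ the factor $e^u$ sweeps out $(0,\infty)$, so the $u$-curve is an open half-line with limit point $\mathbf{c}(v_0)$ rather than a full line through it; this does not affect the argument.
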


\begin{proof}
The statement is clearly true for lightlike planes and light cones, so suppose that $\Sigma$ is non-conical.
According to \eqref{surface-param}, each $u$-parameter curve with $v = v_0$ is a line parallel to the vector $G_0(v_0)$, which according to \eqref{e0-solution-1} is a multiple of the null vector $\e_0$ at each point of $\Sigma$.  Therefore, the $u$-parameter curves are null lines.
\end{proof}

\section{Examples}\label{examples-sec}

In this section we will construct examples of non-conical lightlike surfaces based on different choices for the function $f(v)$.  The simplest examples are those for which $f(v)$ is a constant function, since then the Sturm-Liouville equation \eqref{SL-equation} can be solved explicitly.

\begin{exa} $f(v)=0$.  In this case, solving the system \eqref{G-3rd-order-ode}, \eqref{G0-ICs} for $G_0(v)$ and substituting into \eqref{surface-param} produces the parametrization
\[ \bx(u,v) = \begin{bmatrix} 
\frac{1}{4 \sqrt{2}} \left(e^u (3 v^2 + 4v + 4) - (v^3 + 2 v^2 + 4 v + 4) \right) \\[0.1in]
\frac{1}{12 \sqrt{2}} \left( e^u (-3 v^2 + 12 v + 12) + (v^3 - 6 v^2 - 12 v - 12) \right) \\[0.1in]
\frac{1}{6} \left( e^u (3 v^2 + 6 v) - (v^3 + 3 v^2) \right)
\end{bmatrix}    \] 
for $\Sigma$.  This surface is shown in Figure \ref{f=0-fig}.  (The $x^0$-axis is shown as the vertical axis in all figures.)
\begin{figure}[h]
\includegraphics[width=3in]{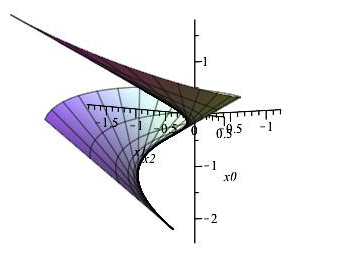}
\caption{Lightlike surface with $f(v)=0$}
\label{f=0-fig}
\end{figure}

\end{exa}

\begin{exa} $f(v)=1$.  In this case, solving the system \eqref{G-3rd-order-ode}, \eqref{G0-ICs} for $G_0(v)$ and substituting into \eqref{surface-param} produces the parametrization
\begin{multline*}
\bx(u,v) =  \\[0.1in]
\begin{bmatrix} 
\frac{1}{16} \left( e^{\sqrt{2}v}((5 \sqrt{2} + 4) e^u - 5 - 2\sqrt{2}) + e^{-\sqrt{2}v} ((5 \sqrt{2} - 4) e^u + 5 - 2\sqrt{2}) + 2\sqrt{2} (v - e^u - 2)
 \right)
 \\[0.1in]
\frac{1}{16} \left( e^{\sqrt{2}v}((\sqrt{2} + 4) e^u - 1 - 2\sqrt{2}) + e^{-\sqrt{2}v} ((\sqrt{2} - 4) e^u + 1 - 2\sqrt{2}) + 2\sqrt{2} (-3v + 3e^u - 2) \right)
 \\[0.1in]
\frac{1}{8} \left( e^{\sqrt{2}v}((2\sqrt{2} + 2) e^u - 2 - \sqrt{2}) + e^{-\sqrt{2}v} ((-2\sqrt{2} +2) e^u - 2 + \sqrt{2}) + 4 (v - e^u +1) \right)
\end{bmatrix}    
\end{multline*}
for $\Sigma$.  This surface is shown in Figure \ref{f=1-fig}. 
\begin{figure}[h]
\includegraphics[width=3in]{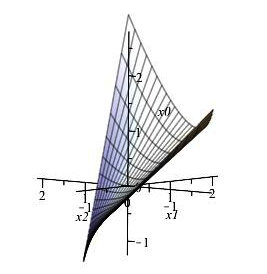}
\caption{Lightlike surface with $f(v)=1$}
\label{f=1-fig}
\end{figure}

\end{exa}

\begin{exa} $f(v)=-1$.  In this case, solving the system \eqref{G-3rd-order-ode}, \eqref{G0-ICs} for $G_0(v)$ and substituting into \eqref{surface-param} produces the parametrization
\[ \bx(u,v) = \begin{bmatrix} 
\frac{1}{8} \left( \sqrt{2}(2 - e^u) \cos(\sqrt{2}v) + (1 + 4 e^u) \sin(\sqrt{2}v) - \sqrt{2}(5v - 5 e^u + 6) \right)
 \\[0.1in]
\frac{1}{8} \left( \sqrt{2}(2 + 3e^u) \cos(\sqrt{2}v) + (-3 + 4 e^u) \sin(\sqrt{2}v) - \sqrt{2}(v - e^u + 6) \right)
 \\[0.1in]\frac{1}{4} \left( (2 - 2e^u) \cos(\sqrt{2}v) + \sqrt{2}(1 + 2 e^u) \sin(\sqrt{2}v) - (2v - 2e^u + 2) \right)
\end{bmatrix}    \] 
for $\Sigma$.  This surface is shown in Figure \ref{f=-1-fig}. 
\begin{figure}[h]
\includegraphics[width=3in]{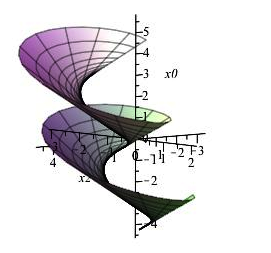}
\caption{Lightlike surface with $f(v)=-1$}
\label{f=-1-fig}
\end{figure}

\end{exa}

For nonconstant functions $f(v)$, equation \eqref{G-3rd-order-ode} generally cannot be solved explicitly for $G_0(v)$; however, we can still construct the corresponding surfaces $\bx(u,v)$ via numerical integration.  Examples with $f(v) = v$ and $f(v) = \sin(v)$ are shown in Figures \ref{f=v-fig} and \ref{f=sinv-fig}.
\begin{figure}[h]
\includegraphics[width=3in]{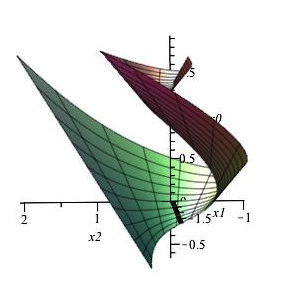}
\caption{Lightlike surface with $f(v)=v$}
\label{f=v-fig}
\end{figure}
\begin{figure}[h]
\includegraphics[width=3in]{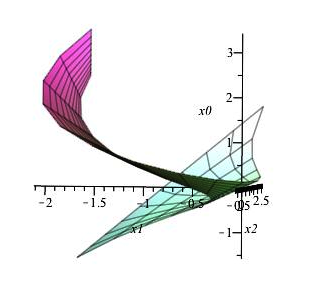}
\caption{Lightlike surface with $f(v)=\sin(v)$}
\label{f=sinv-fig}
\end{figure}

\clearpage

\bibliographystyle{amsplain}
\bibliography{lightlike_surfaces-bib}

\end{document}